\chardef\bslash=`\\ 
\def\verbatim{\interlinepenalty\@M \@verbatim
  \leftskip\@totalleftmargin\advance\leftskip2pc
  \frenchspacing\@vobeyspaces \@xverbatim}
\newtheorem{thm}{Theorem}[section]
\newtheorem{lem}[thm]{Lemma}
\newtheorem{ex}[thm]{Example}
\newtheorem{que}[thm]{Question}
\begin{document}


\title
{On Monotonic Fixed-Point Free  Bijections on Subgroups of $\mathbb  R$}
\author{Raushan Z.  Buzyakova}
\email{Raushan\_Buzyakova@yahoo.com}

\keywords{ordered group, topological group, homeomorphism, shift,  monotonic function, fixed point, periodic point}
\subjclass{06F15, 54H11, 26A48}


\begin{abstract}{
We show that for any continuous monotonic fixed-point free automorphism  $f$ on a $\sigma$-compact subgroup  $G\subset \mathbb R$ there exists a binary operation $+_f$ such that $\langle G, +_f\rangle$  is a topological group topologically isomorphic to $\langle G, +\rangle$ and $f$ is a shift with respect to $+_f$. We then show that monotonicity cannot be replaced by the property of being periodic-point free. We explore a few routes leading to generalizations and counterexamples.
}
\end{abstract}

\maketitle
\markboth{R. Z. Buzyakova}{On Monotonic Fixed-Point Free Bijections on Subgroups of $\mathbb  R$}
{ }

\section{Introduction}\label{S:intro}

\par\bigskip
In this paper we present a few results in the direction of the following general problem.
\par\bigskip\noindent
{\bf Problem.} {\it Let $G$ be a topological group and let $f$ be  a continuous automorphism on $G$. Is it possible to restructure the algebra of $G$ without changing the topology so that $f$ is a shift, or taking the inverse, or possibly some other function nicely defined in terms of the new binary operation? } 

\par\bigskip\noindent
We show that any fixed-point free monotonic bijection on a $\sigma$-compact subgroup $G\subset \mathbb R$ is a shift with respect to some group structure on $G$ topologically isomorphic to $G$. In particular,
any continuous fixed-point free bijection on the reals $\mathbb R$ is a shift with respect to some group operation on $\mathbb R$ compatible with the Euclidean topology of $\mathbb R$. This result can be used in particular to show that any fixed-point free continuous bijection $f$ on $\mathbb R$ can be colored in three colors. In other words, there exists a cover $\{F_i: i = 1,2,3\}$ such that $f(F_i)\cap F_i=\emptyset$ for each $i=1,2,3$. This colorability fact and an $\epsilon-\delta$ argument for shifts was communicated to the author  by Carlos Nicolas. Our theorem shows that an argument for shifts covers the bijection case as any bijective fixed-point free  map is a shift with respect to some topology-compatible group operation on $\mathbb R$. For a reader interested in coloring, a possible tricolor for the shift $f(x)=x+3$ is $A=\bigcup_{n\in\mathbb Z}[6n,6n+2]$, 
$B=\bigcup_{n\in\mathbb Z}[6n+2,6n+4]$, and $C=\bigcup_{n\in\mathbb Z}[6n+4,6n+6]$. We then discuss failures and successes of certain natural generalization routes. In particular, we show that in our main theorem  monotonicity cannot be replaced by the property of being periodic-point free.

We use standard notations and terminology. For topological basic facts and terminology one can consult \cite{Eng}. Since we do not use any intricate algebraic facts, any abstract algebra textbook is a sufficient reference. We consider only continuous maps. All group shifts   under discussion  are shifts  by a non-neutral element.

\par\bigskip
\section{Study}\label{S:section1}

\par\bigskip
By $\mathbb R$ we denote the set of reals endowed with the Euclidean topology. If a different binary operation or topology is used, it will be specified.
Let us agree that given a bijection $f$ and a positive integer $n$, by $f^n$ we denote the composition of $n$ copies of $f$ and by $f^{-n}$ we denote the composition of $n$ copies of $f^{-1}$. The expression $f^0$ is the identity map. We will use the following folklore facts:
\par\medskip\noindent
{\bf Facts:}
\begin{enumerate}
	\item  If $\langle G, +\rangle$ is a group and $f:G\to G$ is a bijection, then $\langle G, \oplus_f\rangle$ is a group, where $f(x)\oplus_f f(y) = f(x + y)$.
	\item The groups in Fact 1 are isomorphic by virtue of $f$.
	\item If $\langle G, \mathcal T_G, +\rangle$ is a topological group and 
$f:\langle G,\mathcal T_G\rangle\to \langle G,\mathcal T_G\rangle$ is a homeomorphism, then
$\langle G, \mathcal T_G, +\rangle$ and $\langle G, \mathcal T_G, \oplus_f\rangle$ are topologically isomorphic by virtue of $f$.
\end{enumerate}
\par\bigskip\noindent
\begin{thm}\label{thm:shift}
Let $f:\mathbb R\to \mathbb R$ be a continuous  fixed-point free bijection. Then there exists a binary operation $+_f$ on $\mathbb R$ such that $\langle \mathbb R, +_f\rangle$ is a topological group topologically isomorphic to $\mathbb R$ and $f$ is a shift with respect to $+_f$.
\end{thm}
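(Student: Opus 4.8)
The plan is to reduce the statement to the classical fact that a fixed-point free increasing self-homeomorphism of $\mathbb{R}$ is topologically conjugate to the translation $x\mapsto x+1$, and then to transport the Euclidean group structure through the conjugating homeomorphism, invoking Facts 1--3.

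First I would record that a continuous bijection of $\mathbb{R}$ onto itself is automatically a homeomorphism and is strictly monotone. If $f$ were strictly decreasing, then $x\mapsto f(x)-x$ would be continuous with $f(x)-x\to+\infty$ as $x\to-\infty$ and $f(x)-x\to-\infty$ as $x\to+\infty$, so it would vanish somewhere, producing a fixed point; hence $f$ must be increasing. Then $x\mapsto f(x)-x$ is continuous and never $0$, so it has constant sign. Replacing $f$ by $f^{-1}$ if necessary (note $f^{-1}$ is again a fixed-point free continuous bijection, and if some operation makes $f^{-1}$ the shift by an element $c$, then applying $f$ shows $f$ is the shift by the opposite of $c$), I may assume $f(x)>x$ for all $x$.

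Next I would build the conjugacy. Fix $x_0\in\mathbb{R}$. The orbit $\{f^n(x_0):n\in\mathbb{Z}\}$ is then strictly increasing, and moreover cofinal and coinitial in $\mathbb{R}$: if, say, $\{f^n(x_0)\}$ were bounded above, its limit $L$ would satisfy $f(L)=L$ by continuity, contradicting fixed-point freeness, and symmetrically as $n\to-\infty$. Now pick any increasing homeomorphism $h_0\colon[x_0,f(x_0)]\to[0,1]$ with $h_0(x_0)=0$ and $h_0(f(x_0))=1$ (e.g.\ the affine one), and define $h\colon\mathbb{R}\to\mathbb{R}$ on each interval $[f^n(x_0),f^{n+1}(x_0)]$ by $h(x)=n+h_0(f^{-n}(x))$. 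Consecutive pieces agree at the common endpoint $f^n(x_0)$, these intervals cover $\mathbb{R}$ (by cofinality/coinitiality), and their $h$-images $[n,n+1]$ cover $\mathbb{R}$; hence $h$ is a well-defined, strictly increasing, continuous surjection, i.e.\ a self-homeomorphism of $\mathbb{R}$, and by construction $h(f(x))=h(x)+1$ for all $x$.

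Finally I would define $a+_f b=h^{-1}(h(a)+h(b))$. This is precisely the operation $\oplus_{h^{-1}}$ of Fact 1 applied to the bijection $h^{-1}$ of the Euclidean group $\langle\mathbb{R},+\rangle$, so by Facts 1 and 3 the structure $\langle\mathbb{R},+_f\rangle$ is a topological group topologically isomorphic to $\langle\mathbb{R},+\rangle$. Setting $c=h^{-1}(1)$, the identity $h(f(x))=h(x)+1=h(x)+h(c)$ yields $f(x)=h^{-1}(h(x)+h(c))=x+_f c$, and $c$ differs from the $+_f$-neutral element $h^{-1}(0)$ since $h^{-1}$ is injective; thus $f$ is a (nontrivial) shift with respect to $+_f$. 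The only nonroutine step is the construction and verification of $h$ — especially the cofinality and coinitiality of orbits, which is exactly where fixed-point freeness is used — everything else being the bookkeeping packaged into Facts 1--3.
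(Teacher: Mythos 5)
Your proposal is correct and is essentially the paper's own argument: your conjugating homeomorphism $h$ (built piecewise on the orbit intervals $[f^n(x_0),f^{n+1}(x_0)]$, using cofinality/coinitiality of the orbit) is exactly the inverse of the paper's correspondence $g\colon x\mapsto x_f$ defined via the decomposition $x=x'+n$, and both proofs then transport the Euclidean group structure through this map via Facts 1--3, so that $f$ becomes the shift by the image of $1$. The only cosmetic difference is that you reduce the case $f(x)<x$ to $f(x)>x$ by passing to $f^{-1}$, while the paper handles it by taking the interval bijection order-reversing.
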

\begin{proof}
Since $f$ is fixed-point free, we conclude that either $f(x)>x$ for all $x$ or $f(x)<x$ for all $x$. We will carry out our argument assuming the former but will make necessary comments for the latter case.
To define our new binary operation, we will first put each $x\in \mathbb R$ into correspondence with $x_f\in \mathbb R$.
\par\medskip\noindent
\underline{\it Definition of $x_f$}. We  define $x_f$ for each $x\in \mathbb R$ in three steps a follows:
\begin{enumerate}
	\item Put $0_f=0$ and $n_f = f^n(0)$ for each $n\in \mathbb Z\setminus \{0\}$.
	\item Let $h:[0,1)\to [0,f(0))$ be an order preserving bijection (hence homeomorphism). For each  $x\in [0,1)$, put $x_f=h(x)$.
\par\smallskip\noindent
{\it Remark. For "$f(x)<x$"-case, $h$ is onto $(f(0),0]$ and order-reversing.}
\par\smallskip\noindent
 Note that $h(0)=0$. Thus, this definition agrees with the first step.
	\item Fix any $x\in \mathbb R$. Then there exist a unique integer $n$ and a unique $x'$ in $[0,1)$ such that $x = x' +n $. Put $x_f = f^n(h(x'))$.
\par\smallskip\noindent
 Note, this definition agrees with steps (1) and (2). Indeed, if $x\in (0,1)$, then $x'=x$ and $n=0$. Then $x_f=f^0(h(x')) = h(x)$, which agrees with step (2).
If $x$ is an integer, then $x'=0$ and $x = 0 +n$. Then $n_f=x_f=f^n(h(0))=f^n(0)$, which agrees with step (1).
\end{enumerate} 

\par\bigskip\noindent
For further reference, we denote by $g$ the correspondence $x\mapsto x_f$.
\par\medskip\noindent
{\it Claim 1}. {\it The correspondence $g$ is an order-preserving bijection on $\mathbb R$ and, hence, a homeomorphism. For "$f(x)<x$"-case, $g$ is order-reversing.}

\par\smallskip\noindent
Let us first show that $g$ is surjective. Fix $y\in \mathbb R$.  Since $f$ is a fixed-point free continuous bijection, $\mathbb R = \bigcup_{n\in \omega}([f^{-(n+1)}(0), f^{-n}(0)]\cup [f^n(0), f^{n+1}(0)])$.  Therefore, there exists $x\in [0, f(0))$ such that $y = f^n(x)$ for some integer $n$. By part (2) of $x_f$-definition, $x=h(z')$ for some $z'\in [0,1)$. Put $z= z' +n$. Then, by part (3), $g(z)=z_f=f^n(h(z'))=f^n(x)=y$. 

Let us show that $g$ is order-preserving. Fix $a,b\in \mathbb R$ such that $a<b$. Let $a=a'+n_a$ and $b=b'+n_b$, where $a',b'\in [0,1)$ and $n_a,n_b\in \mathbb Z$. We assume that $n_a,n_b$ are non-negative. Other cases are treated similarly.
\begin{description}
	\item[\rm Case ($n_a<n_b$)] By part (2) of the definition of $x_f$, we have $h(a'),h(b')\in [0,f(0))$. Since $f$ is a fixed-point free homeomorphism, it is order-preserving. Therefore,  $f^{n_a}(0)\leq f^{n_a}(h(a'))\leq f^{n_a+1}(0)$. Therefore, $a_f\in [f^{n_a}(0),f^{n_a+1}(0)]$. Similarly,  $b_f\in [f^{n_b}(0),f^{n_b+1}(0)]$.
Since $n_b>n_a$, we conclude that $a_f\leq b_f$. Since $f$ and $h$  are one-to-one, we conclude that $a_f<b_f$.
	\item[\rm Case $\neg (n_a<n_b)$] Re-write $b-a>0$ as $(b'-a')+(n_b-n_a)>0$. Since $|b'-a'|<1$, we conclude that $n_b-n_a\geq 0$. By the case's assumption, $n=n_a=n_b$. Hence, $a'<b'$. By part (2) of the definition, $h(a')<h(b')$. Since $f$ is order-preserving, $a_f=f^n(h(a'))<f^n(h(b'))=b_f$.
\end{description}
The claim is proved.
\par\medskip\noindent
Claim 1 and Fact 3 imply that  $\langle \mathbb R, \oplus_g\rangle$ is  a topological group topologically isomorphic to $\mathbb R$ by virtue of $g$. 

\par\medskip\noindent
The next claim completes the proof of the theorem
\par\medskip\noindent
{\it Claim 2.} {\it $f$ is an $\oplus_g$-shift and $f(x_f) = x_f\oplus_g 1_f$ for all $x_f\in \mathbb R$.}
\par\smallskip\noindent
Fix $x_f\in \mathbb R$.
Let $x=x'+n$, where $x'\in [0,1)$ and $n$ is an integer. Then $x_f=f^n(h(x'))$. Therefore, $f(x_f)=f^{n+1}(h(x'))$.
Put $p_f=f^{n+1}(h(x'))$. Then $p=x'+(n+1) = (x'+n)+1$. By the definition of $\oplus_g$, we have $g(p)= g(x'+n)\oplus_g g(1)$, that is, $p_f=x_f\oplus_g1_f$. Since $p_f=f(x_f)$, the claim is proved.

\par\medskip\noindent
To stress the dependence of $\oplus_g$ on $f$ we put $+_f=\oplus_g$, which completes our proof.
\end{proof}

\par\bigskip
Our discussion prompts  a question of whether Theorem \ref{thm:shift} can be generalized to any subgroup $G$ of $\mathbb R$ and any continuous periodic-point free bijection $f$ on $G$. We will show later that a generalization of such a magnitude is not possible. However, certain relaxations on hypotheses can be made. We will next show that the  conclusion of Theorem \ref{thm:shift} holds if we replace $\mathbb R$ by any $\sigma$-compact subgroup of $\mathbb R$ and $f$ by any 
fixed-point free monotonic bijection. We believe that "$\sigma$-compact subgroup" can be replaced by "any subgroup". We will identify the single statement in our argument that requires additional work for a desired generalization. To make argument clearer, let us handle a few cases informally. If $G$ is a  discrete subgroup of $\mathbb R$, then it is order-isomorphic to $\mathbb Z$. Therefore, any  monotonic bijection of $G$ is necessarily a shift. If $G$ has a non-trivial connected component, then $G=\mathbb R$ and Theorem \ref{thm:shift} applies.  To handle the case when  $G$ is  zero-dimensional and dense in $\mathbb R$ let us recall a few classical facts.

It is due to  Sierpienski \cite{Sie} (see also \cite[1.9.6]{vM}) that any countable metric space with no isolated points is homeomorphic to the space of rationals $\mathbb Q$. It is due to Alexandroff and Urysohn \cite{AU} that any $\sigma$-compact zero-dimensional metric space which is nowhere countable is homeomorphic to the product $\mathbb Q\times C$ of the rationals $\mathbb Q$ and the Cantor Set $C$. The immediate applications of these characterizations are the following useful fact:
\begin{description}
	\item[\it Fact] Let $X$ and $Y$ be homeomorphic to $\mathbb Q$ (or both homeomorphic to $\mathbb Q\times C$). Let $a,b\in X$ be distinct and let $c,d\in Y$ be distinct. Then there exists a homeomorphism $h:X\to Y$ such that $h(a)=c$ and $h(b)=d$.
\end{description}
This fact, zero-dimensionality, and homogeneity of $G$ imply the following statement.

\par\bigskip\noindent
\begin{lem}\label{lem:formaintheorem}
Let $G$ be a zero-dimensional dense $\sigma$-compact subgroup of $\mathbb R$ and $U$ a non-empty open subset of $G$. Let $a,b\in U$ be distinct and  $c,d\in G$ satisfy $c<d$. Then there exists a homeomorphism $h:U\to [c,d]\cap G$ such that $h(a)=c$ and $h(b) = d$.
\end{lem}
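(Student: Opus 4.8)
The plan is to reduce the statement to the "useful Fact" quoted just before the lemma, which transfers a homeomorphism between two copies of $\mathbb Q$ (or of $\mathbb Q\times C$) matching two prescribed pairs of points. First I would observe that, since $G$ is a $\sigma$-compact, zero-dimensional, dense subgroup of $\mathbb R$, it is metrizable, separable, zero-dimensional, $\sigma$-compact and has no isolated points (density in $\mathbb R$ kills isolated points). By the Sierpi\'nski and Alexandroff--Urysohn characterizations recalled above, $G$ is homeomorphic either to $\mathbb Q$ (if $G$ is countable) or to $\mathbb Q\times C$ (if $G$ is nowhere countable). The key point is that the same dichotomy holds for every non-empty \emph{clopen} piece of $G$, and in particular for a suitably chosen clopen neighborhood inside $U$ and for the order-interval $[c,d]\cap G$.

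The main steps would be: (1) Shrink $U$. Using zero-dimensionality, pick a clopen set $V$ in $G$ with $a,b\in V\subseteq U$; since $G$ is homogeneous and every non-empty clopen subspace of $\mathbb Q$ (resp.\ $\mathbb Q\times C$) is again homeomorphic to $\mathbb Q$ (resp.\ $\mathbb Q\times C$), the space $V$ is homeomorphic to $G$. (2) Identify the target. The set $[c,d]\cap G$ is clopen in $G$ when $c,d\notin G$, but here $c,d\in G$, so it is closed; however $(c,d)\cap G$ is open and, being $\sigma$-compact (an $F_\sigma$ in $G$), zero-dimensional, dense-in-itself and of the same countability type as $G$, it is again homeomorphic to $G$; and the two added endpoints are isolated-type boundary points that we can accommodate. (3) Realize the endpoints. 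Inside $V$, choose a clopen neighborhood $V_a$ of $a$ and a clopen neighborhood $V_b$ of $b$ with $V_a\cap V_b=\emptyset$ and $V_a\cup V_b\ne V$ so that $W:=V\setminus(V_a\cup V_b)$ is a non-empty clopen set homeomorphic to $G$; correspondingly split $[c,d]\cap G$ as $\{c\}\cup((c,c')\cap G)$, a middle clopen-in-$(c,d)\cap G$ block, and $((d',d)\cap G)\cup\{d\}$. (4) Apply the Fact on the middle pieces to get a homeomorphism carrying a chosen point of $W$ to a chosen point of the middle block, and patch in order-type-correct homeomorphisms on the small end-pieces (each of which, together with its endpoint $a$ or $b$ mapping to $c$ or $d$, is just another copy of $G$ with one marked point, handled by homogeneity). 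Glue along the clopen decomposition to obtain the desired $h:U\to[c,d]\cap G$ with $h(a)=c$, $h(b)=d$.

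The hard part will be Step (2)--(3): making precise that attaching the two closed endpoints $c,d$ to the otherwise-open set $(c,d)\cap G$ does not change the homeomorphism type in a way that obstructs the Fact, and that each "half-open end piece plus its endpoint" is homeomorphic to $G$ with a distinguished point. The cleanest route is to note that since $G$ is dense and zero-dimensional, for points $a\in G$ and $c\in G$ one can find arbitrarily small clopen neighborhoods, and that a half-open interval trace $[c,c')\cap G$ with $c\in G$, $c'\notin G$ is clopen in $G$, hence homeomorphic to $G$, and contains $c$ as an ordinary (non-isolated) point; homogeneity then lets us send any chosen point of a copy of $G$ to $c$. I expect Step (4)'s gluing to be entirely routine once the pieces are correctly set up, so essentially all the real content is bookkeeping with clopen partitions plus two or three invocations of the Fact and of homogeneity of $G$.
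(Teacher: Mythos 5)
Your overall strategy---reduce to the quoted Fact via the Sierpi\'nski and Alexandroff--Urysohn characterizations---is the same as the paper's, but the paper applies it in a single step, and your additional surgery is both unnecessary and, as written, flawed. The paper simply observes that $U$ and $[c,d]\cap G$ each fall directly under the relevant characterization: $U$ is open in the metric space $G$, hence $F_\sigma$ in the $\sigma$-compact space $G$ and therefore $\sigma$-compact, while $[c,d]\cap G$ is closed in $G$, hence $\sigma$-compact; both are zero-dimensional, have no isolated points (by density of $G$ in $\mathbb R$), and are countable or nowhere countable according as $G$ is. So $U$ and $[c,d]\cap G$ are both homeomorphic to $\mathbb Q$, or both to $\mathbb Q\times C$, and the Fact---which already allows you to prescribe the images of two points---produces $h:U\to[c,d]\cap G$ with $h(a)=c$, $h(b)=d$ at once. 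No decomposition of either space and no special handling of the endpoints $c,d$ is needed.

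As written, your construction has two concrete defects. First, after shrinking to a clopen $V$ with $a,b\in V\subseteq U$, your three pieces $V_a$, $V_b$, $W=V\setminus(V_a\cup V_b)$ cover only $V$, so the glued map is a homeomorphism of $V$, not of $U$, onto $[c,d]\cap G$; the set $U\setminus V$ is simply dropped. (The fix is to decompose $U$ itself: $U\setminus(V_a\cup V_b)$ is a nonempty open subset of $G$, hence again a copy of $\mathbb Q$ or $\mathbb Q\times C$.) Second, the justification for the end pieces is wrong: $[c,c')\cap G$ with $c\in G$, $c'\notin G$ is \emph{not} clopen in $G$---since $G$ is dense in $\mathbb R$, the point $c$ is a limit of points of $G$ from the left, so this trace is not open in $G$; it is only relatively clopen in $[c,d]\cap G$. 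Its identification with $\mathbb Q$ or $\mathbb Q\times C$ must instead come from the characterization theorem (it is a nondegenerate closed interval trace of $G$, hence $\sigma$-compact, zero-dimensional, without isolated points, and of the same countability type as $G$), not from clopen-ness. Relatedly, $c$ and $d$ are not ``isolated-type'' points of $[c,d]\cap G$: they are ordinary limit points, and if they were isolated no homeomorphism onto $U$ could exist, since $U$ has no isolated points. Once you notice that closed interval traces of $G$ fall directly under the characterization, your endpoint surgery collapses to the paper's short argument.
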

\begin{proof}
Since $G$ is a group, it is either countable or nowhere countable. Therefore, $G$ is homeomorphic to $\mathbb Q$ or $\mathbb Q\times C$. Since both cases are handled similarly, we assume that the latter is the case. Then any non-empty open subset of $G$ as well as any infinite closed interval of $G$ are $\sigma$-compact and nowhere countable. Therefore, both $U$ and $[c,d]\cap G$ are homeomorphic to $\mathbb Q\times C$. Next apply {\it Fact}.
\end{proof}

\par\bigskip\noindent
The argument of our next result follows that of Theorem \ref{thm:shift}. To avoid unnecessary repetition, we will  reference the already presented argument in a few places. Even though we could have put both theorems under one umbrella, for readability purpose, the author decided to  present them separately.

\par\bigskip\noindent
\begin{thm}\label{thm:monotonic}
Let $G$ be a $\sigma$-compact subgroup of $\mathbb R$ and let $f:G\to G$ be a fixed-point free monotonic bijection. Then there exists a binary operation $+_f$ on $G$ such that $\langle G, +_f\rangle$ is a topological group topologically  isomorphic to $G$ and $f$ is a shift with respect to $+_f$.
\end{thm}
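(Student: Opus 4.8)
The plan is to mimic the proof of Theorem~\ref{thm:shift} almost verbatim, replacing the "unit interval" $[0,1)$ and the order-isomorphism $h$ with a chunk of $G$ and a homeomorphism supplied by Lemma~\ref{lem:formaintheorem}. First I would dispose of the easy cases exactly as in the paragraph preceding the lemma: if $G$ is discrete it is order-isomorphic to $\mathbb Z$, so any monotonic bijection is a shift and there is nothing to do; if $G$ has a nontrivial connected component then $G=\mathbb R$ and Theorem~\ref{thm:shift} applies; so I may assume $G$ is zero-dimensional and dense in $\mathbb R$. As in Theorem~\ref{thm:shift}, fixed-point freeness of the monotonic bijection $f$ forces either $f(x)>x$ for all $x$ or $f(x)<x$ for all $x$; I carry out the argument in the former case and note the modifications (order-reversing $h$, intervals $(f(0),0]$, etc.) for the latter.

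Next I would build the analogue of the correspondence $g\colon x\mapsto x_f$. Fix $0\in G$ and a point $e\in G$ with $0<e<f(0)$ (possible since $G$ is dense); the role of the integer-translates $x'+n$ with $x'\in[0,1)$ is played by writing each $x\in G$ uniquely as an element of $[0,e)\cap G$ "shifted" $n$ times by the map $t\mapsto t+e$ — equivalently, the cosets of the subgroup $e\mathbb Z\cap\dots$; more carefully, since $G$ is a dense subgroup and $0<e$, every $x\in G$ lies in a unique half-open block $[me,(m+1)e)\cap G$, and I set $x'=x-me\in[0,e)\cap G$, $n=m$. Then I apply Lemma~\ref{lem:formaintheorem} with $U=[0,e)\cap G$ — wait, the lemma wants an \emph{open} $U$; instead I apply it to produce a homeomorphism $h\colon [0,e)\cap G\to [0,f(0))\cap G$ with $h(0)=0$ and $h(e^-)$ matching $f(0)$ — since these half-open intervals in a zero-dimensional dense $\sigma$-compact $G$ are themselves homeomorphic to $\mathbb Q$ or $\mathbb Q\times C$, the \emph{Fact} gives such an $h$ pinning down two points, and by a clopen decomposition one arranges $h$ to be an order-preserving homeomorphism of the half-open intervals (this is the one place needing a little care). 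Then define $n_f=f^n(0)$, $x_f=h(x')$ for $x'\in[0,e)\cap G$, and in general $x_f=f^n(h(x'))$ when $x=x'+ne$.

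With $g$ in hand, the two claims go through as before. \emph{Claim 1} ($g$ is an order-preserving — or reversing — bijection of $G$, hence a homeomorphism): surjectivity follows from $G=\bigcup_{n}([f^{-(n+1)}(0),f^{-n}(0)]\cup[f^n(0),f^{n+1}(0)])\cap G$, which holds because $f$ is a fixed-point free monotonic bijection of $G$ (here one uses that $f(0),f^2(0),\dots$ is cofinal in $G$ and $f^{-1}(0),f^{-2}(0),\dots$ is coinitial — true since $f(x)>x$ everywhere and $f$ is onto $G$), and order-preservation is the same two-case block argument as in Theorem~\ref{thm:shift}, now using that $h$ is order-preserving on the block and $f$ is order-preserving on $G$. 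Then Fact~3 gives that $\langle G,\oplus_g\rangle$ is a topological group topologically isomorphic to $G$ via $g$. \emph{Claim 2} ($f$ is the $\oplus_g$-shift by $e_f=g(e)$): for $x=x'+ne$ one has $f(x_f)=f^{n+1}(h(x'))=g(x'+(n+1)e)=g((x'+ne)+e)=x_f\oplus_g e_f$, exactly as before. Setting $+_f=\oplus_g$ finishes the proof.

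The main obstacle is the construction of the order-preserving homeomorphism $h$ between the two half-open blocks $[0,e)\cap G$ and $[0,f(0))\cap G$: Lemma~\ref{lem:formaintheorem} as stated delivers a homeomorphism onto a \emph{closed} interval $[c,d]\cap G$ matching endpoints, and what I actually need is a homeomorphism of \emph{half-open} intervals that is, in addition, \emph{monotonic} rather than merely topological. Monotonicity is not automatic from the topological \emph{Fact}. The fix is to subdivide: pick a strictly increasing sequence $0=t_0<t_1<t_2<\cdots$ in $G$ cofinal in $[0,e)$ and a matching increasing sequence in $[0,f(0))$, apply the \emph{Fact}/Lemma on each closed sub-block $[t_i,t_{i+1}]\cap G$ (these are clopen pieces, each homeomorphic to $\mathbb Q$ or $\mathbb Q\times C$) so that the endpoints glue, and concatenate; the resulting $h$ is a monotone homeomorphism of the half-open intervals with $h(0)=0$. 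This subdivision step — and the parallel verification that the coset/block bookkeeping $x=x'+ne$ behaves well for a dense subgroup where $e$ need not generate — is exactly the "single statement requiring additional work" the author alludes to, and everything else is a transcription of Theorem~\ref{thm:shift}.
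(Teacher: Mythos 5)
Your reduction breaks at the very first step: for a zero-dimensional dense subgroup $G$, fixed-point freeness of $f$ on $G$ does \emph{not} force $f(x)>x$ for all $x$ or $f(x)<x$ for all $x$. That dichotomy comes from connectedness of $\mathbb R$ and is precisely what is lost here: the continuous monotone extension $\bar f:\mathbb R\to\mathbb R$ may have fixed points in $\mathbb R\setminus G$. For instance, an increasing bijection of $\mathbb Q$ built from rational affine pieces can map $(-\infty,\sqrt2)\cap\mathbb Q$ onto itself and $(\sqrt2,\infty)\cap\mathbb Q$ onto itself, with $\sqrt2$ a fixed point of $\bar f$; such an $f$ is fixed-point free on $\mathbb Q$ but satisfies neither inequality globally. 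For the same reason your surjectivity argument is unsound: even if $f(x)>x$ for every $x\in G$ and $f$ is onto $G$, the orbit $\{f^n(0)\}$ need not be cofinal and coinitial in $G$ --- it can converge to an irrational fixed point of $\bar f$ --- so $G\neq\bigcup_{n}\bigl([f^{-(n+1)}(0),f^{-n}(0)]\cup[f^n(0),f^{n+1}(0)]\bigr)\cap G$ and your $g$ misses every point of $G$ beyond that fixed point. In effect your single-block bookkeeping $x=x'+ne$, $x_f=f^n(h(x'))$ proves only the special case in which $\bar f$ is fixed-point free on all of $\mathbb R$, i.e.\ essentially Theorem \ref{thm:shift} again.

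The paper's proof is organized around exactly this phenomenon. It extends $f$ to $\bar f$, takes the fixed-point set $F$ of $\bar f$, splits $\mathbb R\setminus F$ into its convex components $J_n$, and inside each $I_n=J_n\cap G$ chooses a clopen fundamental domain $O_n$ with $I_n=\bigcup_{k\in\mathbb Z}f^k(O_n)$ and the iterates pairwise disjoint. Lemma \ref{lem:formaintheorem} (this is where $\sigma$-compactness is really used, and it is the ``single statement requiring additional work'' the author refers to --- not your gluing step) is then applied to map $[0,c]\cap G$ homeomorphically onto the concatenation $A\oplus O_1\oplus\cdots\oplus O_n\oplus\cdots\oplus B$, i.e.\ one fundamental domain from \emph{every} component simultaneously, with the two endpoints pinned at $p^*$ and $f(p^*)$. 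The resulting $g$ is nowhere near order-preserving, and the paper proves it is a homeomorphism not via monotonicity but from the locally finite closed cover $\{[cn,c(n+1)]\cap G\}_n$ on each member of which $g$ is closed. So your effort to force $h$ to be monotone is both unnecessary and, in the $\mathbb Q\times C$ case, not clearly achievable from the topological \emph{Fact}; what is actually missing from your argument is the multi-component fundamental-domain construction that handles fixed points of $\bar f$ lying outside $G$.
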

\begin{proof}
 We now may assume that $G$ is a non-discrete zero-dimensional subgroup of $\mathbb R$. Since $G$ is closed under addition and contains a nontrivial sequence converging to $0$, we conclude that  $G$ is dense in $\mathbb  R$.
Since $f$ is a monotonic bijection and $G$ is dense in $\mathbb R$, there exists a continuous bijective extension $\bar f:\mathbb R\to \mathbb R$ of $f$. Let $F$ be the set of all fixed points of $\bar f$. Let $\mathcal J=\{J_n: n = 0, 1,...\}$  consist of all maximal convex sets of $\mathbb R\setminus F$. Note that if $F$ is empty, then $\mathcal J = \{J_0=\mathbb R\}$.  Put $\mathcal I = \{I_n=J_n\cap G:n = 0,1,...\}$.

\par\medskip\noindent
{\it Claim 1.} {\it If $I\in \mathcal I$, then there exists a convex clopen $O\subset I$ in $G$ such that $f^n(O)\cap f^m(O)=\emptyset$ whenever $n\not = m$ and $I = \bigcup_{n\in \mathbb Z} f^n(O)$.}
\par\smallskip\noindent
To prove the claim, fix any point $p$ in $J\setminus G$, where $J\in \mathcal J$ such that $I=J\cap G$. Such a point exists due to zero-dimensionality of $G$. Due to absence of fixed points and  monotonicity of $\bar f$ on $J$, we conclude that $\{f^n(p):n\in \mathbb Z\}$ is unbounded in $J$ on either side. Let $I_p$ be the closed interval in $\mathbb R$ with the endpoints $p$ and $f(p)$. We then have $O=I_p\cap G$  is as desired. The claim is proved.

\par\medskip\noindent
For each $I_n\in \mathcal I$, fix $O_n$ that satisfies the conclusion of the claim. Next for each $x\in G$ we will define $x_f$ as follows.
\par\medskip\noindent
\underline {\it Definition of $x_f$.}  Define $x_f$ in three steps.
\begin{enumerate}
	\item Fix $p^*\in I_0$ and $c\in G$ such that $c>0$. Put $0_f=p^*$ and $(nc)_f = f^n(p^*)$ for each $n\in \mathbb Z$.
	\item Let $I_{p^*}$ be the interval of $G$ with endpoints $p^*$ and $f(p^*)$. Let $A$ and $B$ be convex clopen subsets of $I_{p^*}$   such that $p^*\in A$, $f(p^*)\in B$, and $A\cup B = I_{p^*}$. 
By Lemma \ref{lem:formaintheorem}, there exists a homeomorphism
$h$ of $[0, c]\cap G$ with $A\oplus O_1\oplus...\oplus O_n\oplus ...\oplus B$  such that $h(0)=p^*$ and $h(c)=f(p^*)$. For each $x\in [0,c]\cap G$, put $x_f=h(x)$.

\par\smallskip\noindent
\underline{\it Remark.} Note that our use of Lemma \ref{lem:formaintheorem} is the only part in  which the author could not carry out the argument for an arbitrary zero-dimensional subgroup of $\mathbb R$.

\par\smallskip\noindent
Note that our definition agrees with step 1 for $p^*$ and $f(p^*)$.
	\item For each $x\in G$, there exist a unique $x'\in [0,c)\cap G$ and  a unique $n\in \mathbb Z$ such that $x = x'+nc$. Put $x_f=f^n(h(x'))$. Due to uniqueness of $x'$ and $n$, $x_f$ is well-defined. Note that this definition agrees with our definitions at steps 1 and 2. 
\end{enumerate}
Denote by $g$ the correspondence $x\mapsto x_f$.
\par\medskip\noindent
{\it Claim 2. $g$ is surjective.}
\par\smallskip\noindent
To prove the claim, fix $y\in G$. Then $y\in I_n\in\mathcal I$ for some $n$. By the choice of $O_n$, there exist $m\in \mathbb Z$ and $z\in O_n$ such that $y\in f^m(z)$. If $n=0$, we may assume that $z\not = f(p^*)$. By the definition of $h$, there exists $x'\in [0,c)$ such that $h(x')=z$. Put $x=x'+mc$. Then $g(x)=f^m(h(x'))=f^m(z)=y$. The claim is proved.

\par\medskip\noindent
{\it Claim 3. $g$ is a one-to-one.}
\par\smallskip\noindent
To show that $g$ is one-to-one, fix distinct $a,b\in G$ and let $a=a'+nc$ and $b=b'+mc$, where $a',b'\in[0,c)$ and $n,m\in\mathbb Z$. Then $g(a)=f^n(h(a'))$ and $g(b)=f^m(h(b'))$. If $n=m$, then $g(a)\not  =g(b)$ because both $h$ and $f^n$ are one-to-one. Assume now that $n\not = m$. Let $h(a')\in O_i$ and $h(b')\in O_j$. Assume that $i=j$. Then $g(a)\in f^n(O_i)$ and $g(b)\in f^m(O_i)$. By the choice of $O_i$, we have $f^n(O_i)\cap f^m(O_i)=\emptyset$. If $i\not = j$,
then $g(a)\in I_i$ and $g(b)\in I_j$. By the definition of $\mathcal I$, $I_i\cap I_j=\emptyset$, which proves the claim.

\par\medskip\noindent
{\it Claim 4. $g$ is a homeomorphism.}
\par\smallskip\noindent
Observe that if $x\in [cn,c(n+1)]\cap G$, then $g(x)=f^n(h(x-nc))$. Thus,  $g$ is a homeomorphism on $[cn,c(n+1)]\cap G$.
Since $\{[cn,c(n+1)]\cap G\}_n$ forms a locally finite closed cover of $G$ and $g$ is closed on each element of the cover, we conclude that $g$ is a closed map.  By Claims 2 and 3, $g$ is a homeomorphism on $G$. The claim is proved.

\par\medskip\noindent
 Denote by $+_f$ the operation $\oplus_g$ defined in Facts 1-3. By Fact 3,
$\langle G, +_f\rangle$ is a topological group topologically isomorphic to $G$.  Following the argument of Theorem \ref{thm:shift}, $f(x_f) = x_f +_f c_f$ for all $x_f\in G$.
\end{proof}

\par\bigskip
It is natural to wonder if  monotonicity of $f$ in Theorem \ref{thm:monotonic} can be replaced by a periodic-point free homeomorphism. Since the latter can be of wild nature, a counterexample is  expected. Before we present our construction, we prove the following lemma.  
\par\bigskip\noindent
\begin{lem}\label{lem:example}
Suppose $f:\mathbb Q\to \mathbb Q$ is not an identity map and $p\in \mathbb Q$ satisfy the following property:

\par\smallskip\noindent
(*) $\forall n>0\exists m>0$ such that $f^{m+1}((p-1/n,p+1/n)_\mathbb Q)$ meets $f^{-m}((p-1/n,p+1/n)_\mathbb Q)$.
\par\smallskip\noindent
If $\langle \mathbb Q, \oplus\rangle$ is a topological group topologically isomorphic to $\mathbb Q$, then $f$ is not a $\oplus$-shift.
\end{lem}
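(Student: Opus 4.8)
The plan is to argue by contradiction. Suppose $\langle\mathbb{Q},\oplus\rangle$ is a topological group topologically isomorphic to $\mathbb{Q}$ and that $f$ is a $\oplus$-shift. Since $f$ is not the identity, the standing convention on shifts gives a non-neutral element $a$ with $f(x)=x\oplus a$ for every $x$; write $e$ for the $\oplus$-neutral element, $\ominus$ for $\oplus$-subtraction, and $a^{(k)}$ for the $k$-th $\oplus$-power of $a$ (for $k\in\mathbb{Z}$, with the usual convention for $k<0$), so that $f^{k}(x)=x\oplus a^{(k)}$. Since $\langle\mathbb{Q},\oplus\rangle$ is abelian (being isomorphic to $\mathbb{Q}$), the equation $u\oplus a^{(m+1)}=v\oplus a^{(-m)}$ rearranges to $v\ominus u=a^{(2m+1)}$; hence for every $U\subseteq\mathbb{Q}$ and every $m$, the sets $f^{m+1}(U)$ and $f^{-m}(U)$ meet if and only if $a^{(2m+1)}\in D(U)$, where $D(U):=\{\,v\ominus u : u,v\in U\,\}$.

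Now feed property (*) into this reformulation with $U=U_{n}:=(p-1/n,p+1/n)_{\mathbb{Q}}$: for every $n>0$ there is $m_{n}>0$ with $a^{(2m_{n}+1)}\in D(U_{n})$. The map $(u,v)\mapsto v\ominus u$ on $\langle\mathbb{Q},\oplus\rangle^{2}$ is continuous, which is the only place the topological group axioms for $\langle\mathbb{Q},\oplus\rangle$ enter, and it sends $(p,p)$ to $e$; therefore, for every Euclidean neighborhood $W$ of $e$ we have $D(U_{n})\subseteq W$ for all sufficiently large $n$. Consequently $e$ lies in the Euclidean closure of $S:=\{\,a^{(2m+1)}:m\geq 1\,\}$.

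To reach a contradiction, fix a topological isomorphism $\varphi:\langle\mathbb{Q},\oplus\rangle\to\mathbb{Q}$ and set $\alpha:=\varphi(a)$; since $a\neq e$ and $\varphi$ is injective, $\alpha$ is a nonzero rational. Then $\varphi(S)=\{\,(2m+1)\alpha : m\geq 1\,\}$, every element of which has absolute value at least $3|\alpha|>0$, so $0=\varphi(e)$ is not in the Euclidean closure of $\varphi(S)$. Since $\varphi$ is a homeomorphism, this contradicts $e\in\overline{S}$, and the lemma follows.

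I do not expect a genuine obstacle here: essentially all the content is in the reformulation of the first paragraph, after which (*) says precisely that $e$ is an accumulation point of the odd $\oplus$-powers of $a$. The two points to keep straight are that the continuity of $\oplus$-subtraction is meant with respect to the Euclidean topology carried by $\mathbb{Q}$, and that the hypothesis ``topologically isomorphic to $\mathbb{Q}$'' (rather than merely ``a topological group'') is used at the very end, only to transport the accumulation statement into $\langle\mathbb{Q},+\rangle$, where it is plainly false.
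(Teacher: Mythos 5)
Your proof is correct, and it reaches the conclusion by a genuinely different route than the paper. The paper's proof transports the usual order of $\mathbb Q$ through a topological isomorphism $h$ to obtain an ordered topological group $\langle \mathbb Q,\oplus,\prec\rangle$, picks $a,b$ with $a\prec p\prec b\prec (a\oplus c)\prec(p\oplus c)$ so that the $\prec$-interval $(a,b)_\prec$ is ``shorter'' than the shift element $c$, observes that consequently $f^{m+1}((a,b)_\prec)$ and $f^{-m}((a,b)_\prec)$ are disjoint for \emph{every} $m>0$, and then uses the fact that the topology is Euclidean to place some $(p-1/N,p+1/N)_{\mathbb Q}$ inside $(a,b)_\prec$, contradicting (*) at that single $N$. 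You dispense with the order entirely: you encode ``$f^{m+1}(U)$ meets $f^{-m}(U)$'' as $a^{(2m+1)}\in D(U)$, use continuity of $(u,v)\mapsto v\ominus u$ at $(p,p)$ (plus the fact that the $U_n$ form a Euclidean neighborhood base at $p$) to conclude that the odd $\oplus$-powers of the shift element accumulate at the neutral element, and only at the very end invoke the isomorphism $\varphi$ with $\mathbb Q$, where $\{(2m+1)\varphi(a):m\ge 1\}$ is bounded away from $0$. Both arguments rest on exactly the two hypotheses you flag (the topology of $\langle\mathbb Q,\oplus\rangle$ is the Euclidean one, and the group is topologically isomorphic to $\mathbb Q$), and both are complete. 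What the paper's order argument buys is a clean uniform disjointness statement for one fixed interval and all $m$; what yours buys is independence from any compatible order, so it would transfer verbatim to any target group in which distinct powers of a non-neutral element stay bounded away from the identity --- the same discreteness-of-orbits phenomenon the paper later exploits in its $\mathbb R^3$ example.
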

\begin{proof}
Let $h:\mathbb Q\to \langle \mathbb Q, \oplus\rangle$ be a topological isomorphism . Let $\prec$ be the order on 
$\langle \mathbb Q, \oplus\rangle$ defined by $a\prec b$ if and only if $h^{-1}(a)< h^{-1}(b)$. Clearly, $\langle \mathbb Q, \oplus, \prec\rangle$  is an ordered topological group. Let $f$ be a $\oplus$-shift. Since $f$ is not the identity map,  there exists $c\in \mathbb Q\setminus \{h(0)\}$  such that $f(x) = x\oplus c$. Without loss of generality, assume that $h(0)\prec c$. Fix $a,b\in \mathbb Q$
such that $a \prec p \prec  b\prec (a\oplus c) \prec (p\oplus c)$. Clearly $f^{m+1}((a,b)_\prec )$ misses $f^{-m}((a,b)_\prec)$ whenever $m>0$. Since the topology on $\langle \mathbb Q, \oplus\rangle$ is Euclidean, there exists $N$ such that 
$(p-1/N, p+1/N)_\mathbb Q\subset (a,b)_\prec$. Therefore,  $f^{m+1}((p-1/N, p+1/N))$ misses 
$f^{-m}((p-1/N, p+1/N))$ for any $m>0$. 
\end{proof}
\par\bigskip\noindent
Note that a non-identity homeomorphism with periodic points cannot be a shift in any group structure isomorphic to $\mathbb Q$. Therefore, the importance of monotonicity in Theorem \ref{thm:monotonic} should  be demonstrated by a non-monotonic homeomorphism with no periodic points.
\par\bigskip\noindent
\begin{ex}\label{ex:nonmonotonic}
There exists a periodic-point free homeomorphism $f:\mathbb Q\to \mathbb Q$ such that $f$ is not a shift in  any topological group structure on $\mathbb Q$ topologically isomorphic to  $\mathbb Q$. 
\end{ex}
\par\noindent
{\it Construction.}  All intervals under consideration are in $\mathbb Q$. Therefore, instead of $(a,b)_\mathbb Q$ we write $(a,b)$. We will construct a homeomorphism $f:\mathbb Q\to\mathbb Q$ that satisfies property (*) of Lemma \ref{lem:example} for $p=0$. Let $\epsilon$ be any irrational number strictly between $0$ and $\frac{1}{2\sqrt{2}}$. 
We put $\epsilon = \frac{1}{\sqrt{7}}$ for better visualization.
\par\noindent

\par\medskip\noindent
\underline{\it Step 1.} Let $g_1$ and $g_{-1}$ be any two order-preserving homeomorphisms with the following ranges and domains:
 $$g_1:\left [0-\frac{1}{2^1\sqrt{2}},0+\frac{1}{2^1\sqrt{2}}\right ]\to [1-\epsilon, 1+\epsilon]$$
 $$g_{-1}:[-1-\epsilon, -1+\epsilon]\to\left  [0-\frac{1}{2^1\sqrt{2}},0+\frac{1}{2^1\sqrt{2}}\right ]$$
Such maps exist since the endpoints in all intervals are irrational, and therefore, not in $G$.
\par\medskip\noindent
\underline{\it Step $n>1$.} Let $g_n$ and $g_{-n}$ be any two order-preserving homeomorphisms with the following ranges and domains:
 $$g_n:g_{n-1}\circ ...\circ g_1 \left(\left [0-\frac{1}{2^n\sqrt{2}},0+\frac{1}{2^n\sqrt{2}}\right ]\right )\to [n-\epsilon, n+\epsilon]$$
 $$
g_{-n}:[-n-\epsilon, -n+\epsilon]\to 
g^{-1}_{-(n-1)}\circ ...\circ g_{-1}^{-1}\left (\left  [0-\frac{1}{2^n\sqrt{2}},0+\frac{1}{2^n\sqrt{2}}\right ]\right )
$$
Note that due to irrationality of $\sqrt{2}$ and $\sqrt{7}$, ranges and domains of these homeomorphisms are clopen intervals in $G$.
\par\medskip\noindent
For better visualization, let us write out the domains of the defined function with $\epsilon =  \frac{1}{\sqrt{7}}$. We have $dom (g_1) = \left [-\frac{1}{2\sqrt{2}}, \frac{1}{2\sqrt{2}}\right ]$. If $n\leq -1$, then 
$dom(g_n) = \left [n- \frac{1}{\sqrt{7}}, n+ \frac{1}{\sqrt{7}}\right ]$. If $n\geq 2$, then 
$dom(g_n)\subset \left [(n-1)- \frac{1}{\sqrt{7}}, (n-1)+ \frac{1}{\sqrt{7}}\right ]$. Due to smallness of $ \frac{1}{\sqrt{7}}$, these sets are mutually disjoint. Let us summarize this observation for further reference.
\par\medskip\noindent
{\it Claim. 
If $n,m\in \mathbb Z\setminus \{0\}$ are distinct, then the domains of $g_n$ and $g_m$ are disjoint.
}

\par\medskip\noindent
Next we select special clopen intervals  as follows.
\par\medskip\noindent
\underline{\it Definition of $A_n,B_n$ for $n>0$:} Fix any non-empty clopen intervals $A_n\subset  (n-\epsilon, n+\epsilon)$ and $B_n\subset  (-n-\epsilon, -n+\epsilon)$ with the following properties:
\begin{description}
	\item[\rm P1] $A_n$ misses the  domain of $g_{n+1}$. Such a set exists because the domain of $g_{n+1}$ is a proper clopen interval of $(n-\epsilon, n+\epsilon)$.
	\item[\rm P2] $B_n$ misses the range of $g_{-(n+1)}$.  Such a set exists because the range of $g_{-(n+1)}$ is a proper clopen interval of $(-n-\epsilon, -n+\epsilon)$.
	\item[\rm P3] $g_n\circ ...\circ g_1\circ g_{-1}\circ ...\circ g_{-n}(B_n)$ misses $A_n$.
\end{description}

\par\medskip\noindent
For each $n$, fix a homeomorphism $h_n$ from $A_n$ onto $B_n$.
 Define $g$ as follows:
$$
g(x) = \left\{
        \begin{array}{ll}
             g_n(x) & x {\rm\  is \ in \ the\ domain\ of}\  g_n\  {\rm\  for}\  n\in \mathbb Z\setminus \{0\} \\
           h_n(x)& x\in A_n
        \end{array}
    \right.
$$
\par\medskip\noindent
To show that $g$ is a function on $D=\left (\bigcup_{n\geq 1} A_n\right )\cup \left (\bigcup \{dom(g_n): n\in \mathbb Z\setminus \{0\}\}\right )$, we need to show that any $x\in D$ belongs to exactly one element of  $\{A_n:n =1,2,...\}\cup  \{dom(g_n): n\in \mathbb Z\setminus \{0\}\}$. By Claim, we may assume that $x\in A_n$. By property P1, $x$ is not in  the domain of $g_{n+1}$. Since $A_n\subset  [n-\epsilon, n+\epsilon]$, $x$ is not in the domain of any $g_m$ with $m\not = n+1$. By Claim and selection of $A_n$'s, we conclude that $x\not\in A_m$ for $n\not = m$.

Let us show that $g$ is a homeomorphism between its domain and range. By property P2, $g$ is one-to-one.  The domain of $g$ is the union of the clopen discrete family 
$\{dom(g_n):n\in \mathbb Z\setminus \{0\}\}\cup \{A_n:n\in \mathbb Z^+\}$. Since $g$ is one-to-one and is a homeomorphism on each member of the family, $g$ is a homeomorphism. By property P3, $g$ has no periodic points. 
 The complements of the domain and range of $g$ are clopen proper subsets of $\mathbb Q$ that are unbounded on both sides.
Therefore, $g$ has a homeomorphic extension $f:\mathbb Q\to \mathbb Q$  that has no periodic points.

It remains to show that $f$ and $0$ satisfy the hypothesis of Lemma \ref{lem:example}.
Fix any $n>0$. We have $f^n\left (\left ( 0-\frac{1}{2^n\sqrt{2}},0+\frac{1}{2^n\sqrt{2}} \right )\right )=(n-\epsilon, n+\epsilon)$.
Since $A_n\subset (n-\epsilon, n+\epsilon)$, we conclude that 
$B_n\subset f^{n+1}\left (\left ( 0-\frac{1}{2^n\sqrt{2}},0+\frac{1}{2^n\sqrt{2}} \right )\right )$. On the other and, $B_n\subset (-n,-\epsilon, -n+\epsilon) = f^{-n}\left (\left ( 0-\frac{1}{2^n\sqrt{2}},0+\frac{1}{2^n\sqrt{2}} \right )\right )$. Therefore, $m=n$ is as desired. $\square$

\par\bigskip
Since zero-dimensional spaces may admit very wildly mannered automorphisms, one may ask if our theorem for $\mathbb  R$ can be extended to $\mathbb R^n$. Alas, an example is in order. 
\par\bigskip\noindent
\begin{ex}\label{ex:R3}
There exists a periodic-point free homeomorphism $f:\mathbb R^3\to \mathbb R^3$ such that $f$ is not a shift in  any group structure of $\mathbb R^3$ topologically isomorphic to $\mathbb R^3$. 
\end{ex}
\par\noindent
{\it Construction.} Let $h:\mathbb R^3\to \mathbb R^3$ be the rotation of the space by $\sqrt 2$ degrees about the $z$-axis in the positive direction. Put $S=\{\langle x,y,z\rangle\in\mathbb R^3: x^2+y^2\leq 1\}$. Define $g:S\to S$ by letting 
$g(x,y,z) = \langle x,y,z+1-x^2-y^2\rangle$. In words, $g$ slides vertically every point by the distance equal to the distance from the point to the wall of the cylinder. Thus, the points on the boundary of the cylinder are not moved. Clearly, both $h$ and $g$ are homeomorphisms. Define $f:\mathbb R^3\to\mathbb R^3$ as follows:

$$
f(x,y,z) = \left\{
        \begin{array}{ll}
             h(x,y,z) & \langle x,y,z\rangle \not\in S \\
          g\circ h(x,y,z) & \langle x,y,z\rangle\in S
        \end{array}
    \right.
$$
\par\bigskip\noindent
Since $g$ is the identity on the boundary of the cylinder, we conclude that $g\circ h$ is equal to $h$ on the boundary of $S$. Therefore, $f$  is a homeomorphism. Points on the $z$-axis are slided up by $1$ unit. Points off the $z$-axis undergo a rotation by $\sqrt 2$ degrees. Thus, $f$ has no periodic points. Since the points on the boundary of the cylinder are only rotated,  the set $\{f^n(1,0,0): n\in \omega\}$ is a an infinite subset of the unit disc in the $xy$-plane centered at the origin. Therefore, the set has a
cluster point. However,$\{\langle x,y,z\rangle +n\langle a,b,c\rangle: n\in \omega\}$ is a closed discrete subset of $\mathbb R^3$ for any $\langle x,y,z\rangle, \langle a,b,c\rangle \in \mathbb R^3$. Therefore, $f$ cannot be a shift in any group structure on $\mathbb R^3$ topologically isomorphic to $\mathbb R^3$.
$\square$ 

\par\bigskip\noindent
We would like to finish this study with a few remarks of categorical nature. Assume that  $f:\mathbb R\to \mathbb R$ is a map that is a shift by a non-neutral element with respect to some group operation $+_f$ on $\mathbb R$ that is compatible with the Euclidean topology. Then, $f$ is a homeomorphism and fixed-point free. Therefore, we have a characterization of all maps on $\mathbb R$ that are shifts by non-neutral elements after some refitting of algebraic structure of $\mathbb R$. It is therefore justifiable to view fixed-point free homeomorphisms on $\mathbb R$ as {\it generalized shifts}. 
Similarly, we can define generalized polynomial (trigonometric, etc.) functions as maps in form 
$f(x) = a_n\star x^n \oplus ...\oplus a_0$, for some addition $\oplus$ and   multiplication $\star$ compatible with the topology of $\mathbb R$. Therefore, it would be interesting to consider the following general problem.
\par\bigskip\noindent
{\bf Problem.} {\it
Characterize generalized polynomials, trigonometric functions, and generalized versions of other standard calculus functions.
}

\par\bigskip\noindent
At last, recall that we were unsuccessful in generalizing Theorem \ref{thm:shift} to a desired extent. Therefore, the question is in order.
\par\bigskip\noindent
\begin{que}
Let $G$ be a topological subgroup of $\mathbb R$ and let $f:G\to G$ be a fixed-point free monotonic homeomorphism. Does there exist a binary operation $\oplus$ on $G$ such that $\langle G,\oplus\rangle$ is a topological group topologically isomorphic to $G$ and $f$ is a $\oplus$-shift?
\end{que}

\par\bigskip\noindent
{\bf Acknowledgment.} The author would like to thank the referee for valuable remarks and corrections.

\par\bigskip

\end{document}